\newtheorem{theorem}{Theorem}[section]
\newtheorem{lemma}[theorem]{Lemma}	      
\numberwithin{equation}{section}
\numberwithin{figure}{section}
\providecommand{\keywords}[1]
{
  \small	
  \textbf{\textit{Keywords---}} #1
}
\def\Re{{\rm Re}\,}
\def\Im{{\rm Im}\,}
\title{Octonionic Quadratic Equations}
\author{T.Kalpa Madhawa\\
\small Department of Mathematics\\
\small Southern Illinois University\\
}
\begin{document}
\maketitle
\begin{abstract} 
There are four division algebras over $\mathbb{R}$, namely real numbers, complex numbers, quaternions, and octonions. 
Lack of commutativity and associativity make it difficult to investigate algebraic and geometric properties of octonions. It does not make sense to ask, for example, whether the equation $x^2+1=0$ is solvable, 
without specifying the field in which we want the solutions to be lie. 
The equation $x^2+1=0$ has no solutions in $\mathbb{R}$,
which is to say, there are no real numbers satisfying this equation. 
On the other hand, there are complex numbers 
which do satisfy this equation in the field $\mathbb{C}$
of all complex numbers. 
How about if we extend the same idea to other 
two normed division algebras quaternions and octonions.
Liping Huang and Wasin So \cite{L.Huang} derive explicit formulas for computing the roots of quaternionic quadratic equations. We extend their work to octonionic case and solve monic left octonionic quadratic equation of the form $x^2+bx+c=0$, where $a,b$ are octonions in general.[ We called this form of quadratic equation as left octonion quadratic equation because we can consider $x^2+xb+c=0$ as a different case due to non-commutativity of octonions].
Finally, we represent the left spectrum of
$2\times2$ octonionic matrix as a set of solutions to a corresponding octonionic quadratic equation, which is an application of deriving explicit formulas for computing the roots of octonionic quadratic equations.
\end{abstract}
\keywords{octonionic quadratic equation, left spectrum.}
\section{Quaternions and Octonions}
\subsection{Algebra of quaternions}
Quaternions arose historically from Sir William Rowan Hamilton's attempts in the midnineteenth century to generalize complex numbers in some way that would applicable to three-dimensional space. \cite{Tevian} In complex numbers we have square root of -1 called $i$, what happens if we include another, independent, square root of -1? Call it $j$. Then the big question is, what is $ij$?
William Hamilton eventually proposed that $k=ij$ should be yet another square root of -1, and that the multiplication table should be cyclic, that is
\begin{equation*}
\begin{split}
ij=k=-ji,\\
jk=i=-kj, \\
ki=j=-ik,
\end{split}
\end{equation*}
\begin{figure}[h] 
  \centering
  \includegraphics[scale=.5]{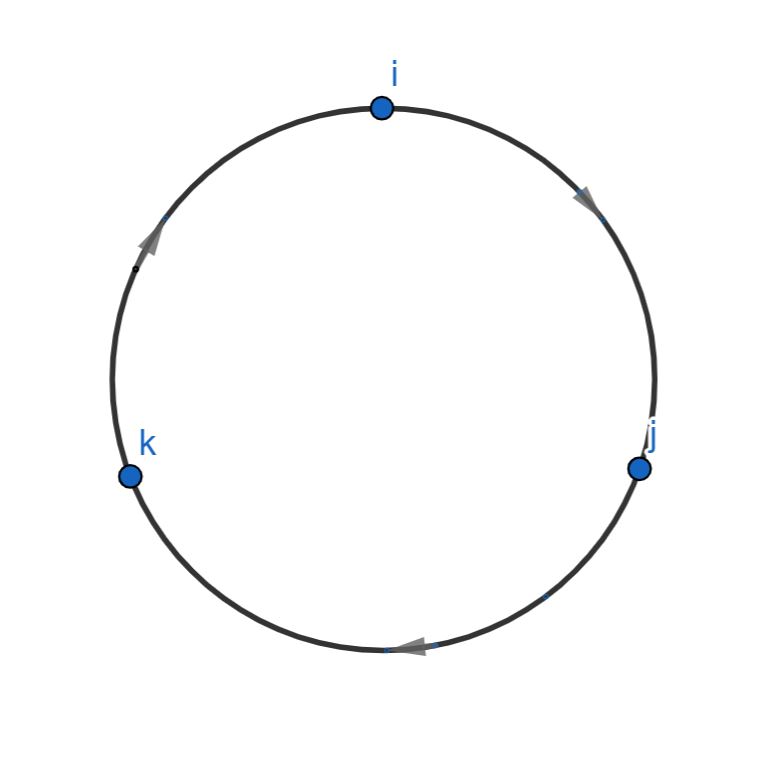}
  \caption{The quaternionic multiplication table }
  \label{fig:QM_}
\end{figure}

\newpage
\noindent We refer to $i$,$j$, and $k$ as imaginary quaternionic units. Notice that these units anticommute.
This multiplication table is shown schematically in Figure $1.1$ multiplying two of these quaternionic units together in the direction of the arrow yields the third; going against the arrow contributes an additional minus sign.
The quaternions are denoted by $\mathbb{H}$; the H is for "Hamilton", they are spanned by the identity element 1 and three imaginary units, that is , a quaternion $q$ can be represented as four real numbers $(q_1,q_2,q_3,q_4)$, usually written
\begin{equation}
q=q_1+q_2i+q_3j+q_4k
\end{equation}
which can be thought of as a point or vector in $\mathbb{R}^4$. since $(1.1)$ can be written in the form
\begin{equation}
         q = (q_1+q_2i)+(q_3+q_4i)j
\end{equation}
we see that a quaternion can be viewed as a pair of complex numbers, we can write
$\mathbb{H}=\mathbb{C}\oplus\mathbb{C}j$
in direct analogy to the construction of $\mathbb{C}$ from $\mathbb{R}$.
The quaternionic conjugate $\bar{q}$ of a quaternion $q$ is obtained via the(real) linear map which reverses the sign of each imaginary unit, so that
\begin{equation}
\bar{q}=q_1-q_2i-q_3j-q_4k
\end{equation}

\noindent if $q$ is given by $(1.1)$. Conjugation leads directly to the norm of quaternion $|q|$, defined by 

\begin{equation}
|q|^2=q\bar{q}=q_1^2+q_2^2+q_3^2+q_4^2.
\end{equation}

\noindent again, the only quaternion with norm zero is zero, and every nonzero quaternion has a unique inverse, namely

\begin{equation}
q^{-1}=\bar{q}/|q|^2
\end{equation}
uaternionic conjugation satisfies the identity $\overline{pq}=\bar{q}\bar{p}$ from which it follows that the norm satisfies $|pq|=|p||q|$.
\noindent Squaring both sides and expanding the result in terms of components yields the $4-$ sqaures rule,
\begin{equation}
\begin{split}
&(p_1q_1-p_2q_2-p_3q_3-p_4q_4)^2+(p_2q_1+p_1q_2-p_4q_3+p_3q_4)^2  \\
&+(p_3q_1+p_4q_2+p_1q_3-p_2q_4)^2+(p_4q_1-p_3q_2+p_2q_3+p_1q_4)^2 \\
&=(p_1^2+p_2^2+p_3^2+p_4^4)(q_1^2+q_2^2+q_3^2+q_4^2)
\end{split}
\end{equation}
\noindent which is not quite as obvious as the $2-$ squars rule. This identity implies that the quaternions form a normed division algebra, that is, not only are there inverses, but there are no zero divisors-if a product is zero, one of the factors must be zero.
It is important to realize that $\pm i,\pm j$, and $\pm k$ are not the only quaternionic square roots of $-1$, 

\newpage
\noindent any imaginary quaternion squares to a negative number, so it is only necessary to choose its norm to be one in order to get a square root of $-1$. The imaginary quaternions of norm one form a $2-$dimensional sphere($q_1=0$); in the above notation, this is the set of points
\begin{equation}
q_2^2+q_3^2+q_4^2=1
\end{equation}

\noindent any such unit imaginary quaternion $u$ can be used to construct a complex subalgebra of $\mathbb{H}$, which we will also denote by $\mathbb{C}$, namely

\begin{equation*}
\mathbb{C}={a+bu}
\end{equation*}

\noindent with $a,b \in \mathbb{R}$. Furthermore, we can use the identity to write 

\begin{equation*}
e^{u\theta}=\cos\theta+u\sin\theta
\end{equation*}

\noindent This means that any quaternion can be written in the form
$q=re^{u\theta}$\ where $r=|q|$\ and\ $u$\ denotes the direction of the imaginary part of $q.$

\newpage
\subsection{Algebra of octonions}

In analogy to the previous construction of $\mathbb{C}$ and $\mathbb{H}$, an octonion  $x$ can be thought of as a pair of quaternions, so that
\begin{equation*}
                   \mathbb{O}=\mathbb{H}\oplus \mathbb{H} \, l
\end{equation*}
we will denote $i$ times $l$ simply as $il$, and similarly with $j$ and $k$. It is easy to see that $l$,$il,jl,$ and $kl$ all square to $-1$; there are now seven independent imaginary units, and we could write

\begin{equation}
x=x_1+x_2i+x_3j+x_4k+x_5l+x_6il+x_7jl+x_8kl 
\ \ where\ 
x_m\in\mathbb{R} \quad \ for\ i=1,2,...,8.
\end{equation}
\noindent which can be thought of as a point or vector in $\mathbb{R}^8$. The real part of $x$ is just $x_1$; the imaginary part of $x$ is everything else. Algebraically, we could define
\begin{equation}
\Re(x) = (x+\bar{x})/2 
\end{equation}

\begin{equation}
\hbox{Im}(x) = (x-\bar{x})/2
\end{equation}

\noindent The imaginary part is differs slightly from the standard usage of these terms for complex numbers, where $\hbox{Im}(z)$ normally refers to a real numbers, the coefficient of $i$. This convention is not possible here, since the imaginary part has seven degrees of freedom, and can be thought of as a vector in $\mathbb{R}^7$.
The full multiplication table is summarized in Figure $1.2$ 
by means of the $7-$point projective plane. 
Each point corresponds to an imaginary unit.

\begin{figure}[h] 
  \centering
  \includegraphics[scale=.6]{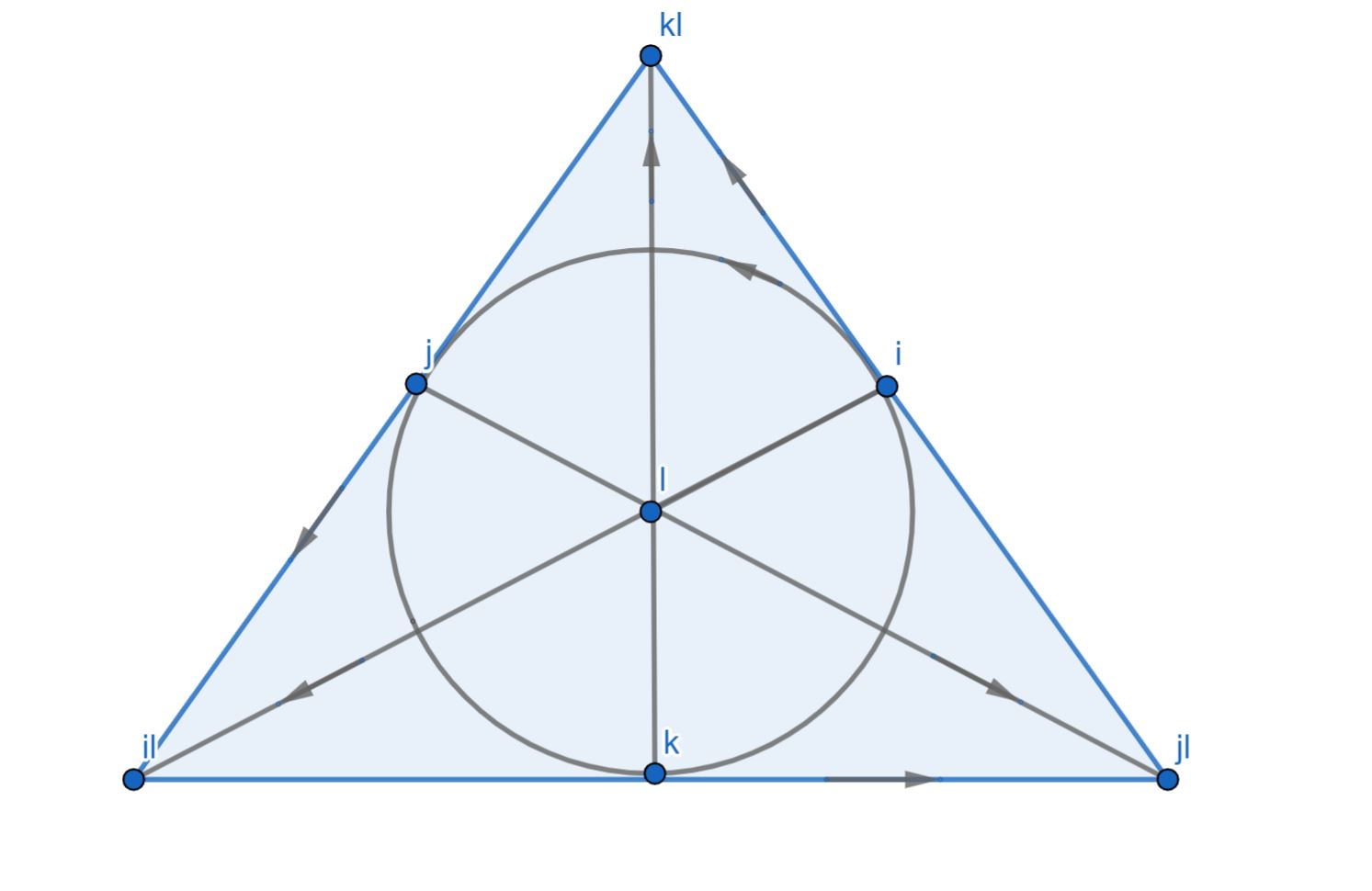}
  \caption{The octonionic multiplication table}
  \label{fig:oct2}
\end{figure}

\newpage
\noindent 
Each line corresponds to a quaternionic triple, much like $\{i,j,k\}$, 
with the arrow giving the orientation. 
For Example,

\begin{equation}
\label{eq:kl}
\begin{split}
&kl=kl\\
&lkl=k\\
&klk=l
\end{split}
\end{equation}
and each of these products anticommutes, that is, reversing the order contributes a minus sign.
We define the octonionic conjugate $\bar{x}$ of an octonion $x$ as the (real) linear map which reverses the sign of each imaginary unit. 
Thus,
\begin{equation}
\bar{x}=x_1-x_2i-x_3j-x_4k-x_5l-x_6il-x_7jl-x_8kl
\end{equation}
if $x$  is given by $(1.8)$. Direct computation shows that
\begin{equation}
         \overline{xy}=\bar{y}\bar{x}
\end{equation}
The norm of an octonion $|x|$ is defined by
\begin{equation}
|x|^2=x\bar{x}=x_1^2+x_2^2+x_3^2+x_4^2+x_5^2+x_6^2+x_7^2+x_8^2
\end{equation}
Again, the only octonion with norm zero is zero, and every nonzero octonion has a unique inverse, namely

\begin{equation}
x^{-1} =\bar{x}/|x|^2
\end{equation}

\noindent as with the other division algebras, the norm satisfies the identity

\begin{equation}
|xy|=|x||y|
\end{equation}

\noindent writing out this expression in terms of components yields the $8-$squares rule, which is no longer at all obvious. The octonions therefore also form a normed division algebra.\\
A remarkable property of the octonions is that they are not associative, For Example

\begin{equation}
\begin{split}
&(ij)(l)=+(k)(l)=+kl\\
&(i)(jl)=(i)(jl)=-kl
\end{split}
\end{equation}

\newpage
\section{Octonionic quadratic formulas.}

We will start with two lemmas.

\begin{lemma}
\label{prop:1}
Let $B, E,$ and $D$ be real numbers such that
\begin{enumerate}
    \item $D\neq 0$, and
    \item $B<0$ implies $B^2<4E$
\end{enumerate}
Then the cubic equation
\begin{equation*}
    y^3+2By^2+(B^2-4E)y-D^2=0
\end{equation*}
has exactly one positive solution.
\end{lemma}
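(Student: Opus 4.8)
The plan is to study the real cubic $f(y) = y^3 + 2By^2 + (B^2 - 4E)y - D^2$ on the half-line $y>0$ and to split the claim into two halves: \emph{existence} of a positive root and \emph{uniqueness}. Existence is the easy part. Since $D \neq 0$, condition (1) gives $f(0) = -D^2 < 0$, while the positive leading coefficient forces $f(y) \to +\infty$ as $y \to +\infty$. By the Intermediate Value Theorem there is at least one root in $(0,\infty)$, so the whole weight of the lemma falls on uniqueness.

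For uniqueness I would read off the sign sequence of the coefficients of $f$, namely $1,\ 2B,\ B^2 - 4E,\ -D^2$, and apply Descartes' rule of signs. The first and last entries always carry signs $+$ and $-$. I would then run through the sign of $B$: when $B \geq 0$ the sequence is $+,\,+,\,\pm,\,-$ and has exactly one sign change whatever the sign of $B^2-4E$ (the value $B=0$ being handled identically); when $B < 0$, hypothesis (2) forces $B^2 - 4E < 0$, so the sequence reads $+,\,-,\,-,\,-$, again with exactly one sign change. Because the number of positive real roots counted with multiplicity is at most the number of sign changes and differs from it by an even integer, a single sign change pins down exactly one positive root (and shows it is simple).

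The crux of the matter, and the reason condition (2) is imposed, is to exclude the pattern $+,\,-,\,+,\,-$, which would give three sign changes and hence potentially three positive roots. This pattern arises precisely when $2B<0$ and $B^2-4E>0$, that is when $B<0$ and $B^2>4E$, which is exactly the configuration forbidden by (2). Equivalently, in a calculus formulation one looks at $f'(y) = 3y^2 + 4By + (B^2 - 4E)$: for $f$ to be increasing--decreasing--increasing on $(0,\infty)$ it would need two positive critical points, and by Vieta's formulas two positive roots of $f'$ require $-4B/3 > 0$ together with $(B^2 - 4E)/3 > 0$, i.e. again $B<0$ and $B^2>4E$. In every admissible case $f'$ therefore has at most one positive root, so $f$ is either monotone increasing or decreasing-then-increasing on $(0,\infty)$; combined with $f(0)<0$ this forces a single crossing. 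I expect the only genuine obstacle to be this case management, and in particular the verification that condition (2) eliminates exactly the three-root configuration; the surrounding steps are routine.
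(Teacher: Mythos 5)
Your proposal is correct, but the uniqueness half runs along a genuinely different track than the paper's. The paper gets existence exactly as you do (from $f(0)=-D^2<0$ and the Intermediate Value Theorem), but for uniqueness it applies Vieta's formulas to the cubic itself: assuming all three roots $r_1,r_2,r_3$ are real, it notes $r_1r_2r_3=D^2>0$ forces $r_2,r_3$ to share a sign, and if both were positive then $r_1+r_2+r_3=-2B>0$ and $r_1r_2+r_1r_3+r_2r_3=B^2-4E>0$ would force $B<0$ and $B^2>4E$ simultaneously, contradicting hypothesis (2). You instead count sign changes in the coefficient sequence $1,\ 2B,\ B^2-4E,\ -D^2$ and invoke Descartes' rule of signs, with a first-derivative monotonicity argument as an alternative. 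Both proofs pivot on the identical observation --- the only dangerous configuration is $B<0$ together with $B^2>4E$, which is precisely what (2) forbids --- so they are close in spirit, but your route buys something concrete: Descartes' rule counts roots with multiplicity and handles all cases uniformly, whereas the paper's argument tacitly assumes the cubic has three real roots and never addresses the case of one real root plus a complex-conjugate pair (trivial, since that pair has positive product, forcing the lone real root to be the positive one, but it deserves a sentence); your version also establishes that the positive root is simple, which the paper's does not. Either of your two arguments (Descartes or the analysis of $f'(y)=3y^2+4By+(B^2-4E)$ via Vieta on the quadratic) stands on its own as a complete proof.
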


\begin{proof}
Let
\begin{equation*}
    f(y)= y^3+2By^2+(B^2-4E)y-D^2=0
\end{equation*}
Note that $f(0)=-D^2<0$ and $\lim_{y \to +\infty} f(y)=+\infty$.
According to the intermediate value Theorem:  if $f$ is a continuous function over an interval [a,b], then $f$ takes all values between $f(a)$ and $f(b)$.
Since above cubic polynomial is a continuous function, its graph must intersect the x-axis at some finite points greater than zero. So the equation has at least one positive root. 
Now let us prove that $f$ has only one positive root.
\\
Suppose that $f$ has three real roots, $r_1,r_2$ and $r_3$. Take $r_1>0$ be the positive root we found above. We must show that $r_2,r_3<0$. Then we have the result.
\\
we know,
\begin{equation*}
    r_1.r_2.r_3=D^2>0
\end{equation*}
this implies the product of $r_2$ and $r_3$ is positive. Therefore, $r_2$ and $r_3$ should be in same sign( both positive or negative).
Let's assuming $r_2,r_3>0$, 
\\
if 
$B<0$ implies $r_1+r_2+r_3=-2B>0$ 
But
\begin{equation*}
    r_1.r_2+r_1.r_3+r_2.r_3=B^2-4E
\end{equation*}
and should be positive. This is a contradiction due to part $2$ of $\textbf{Lemma 2.1}.$ thus, we have only one positive solution to $f(y)=0$. 
\end{proof}

\newpage 
\begin{lemma}
\label{prop:2}
Let $B, E,$ and $D$ be real numbers such that
\begin{enumerate}
    \item $E\geq 0$, and
    \item $B<0$ implies $B^2<4E$
\end{enumerate}
Then the real system
\begin{equation}
   N^2-(B+T^2)N+E=0,
\end{equation}
\begin{equation}
      T^3+(B-2N)T+D=0, 
\end{equation}
has at most two solutions $(T,N)$ satisfying $T\in\mathbb{R}$ and $N\geq0$ as follows.
\begin{enumerate}
\item $T=0, N=(B\pm\sqrt{B^2-4E)}/2$ provided that $D=0, B^2\geq 4E.$
\item $T=\pm\sqrt{2\sqrt{E}-B}, N=\sqrt{E}$ provided that $D=0, B^2 < 4E.$
\item $T=\pm\sqrt{z}, N=(T^3+BT+D)/2T$ provided that $D\neq 0$ and $z$ is the 
unique positive root of the real polynomial\ \ $z^3+2Bz^2+(B^2-4E)z-D^2$.
\end{enumerate}
\end{lemma}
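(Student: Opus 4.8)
The plan is to treat the system as two coupled equations and eliminate one variable to reduce everything to the cubic from Lemma 2.1. First I would dispose of the degenerate case $T=0$ separately. Setting $T=0$ in the second equation gives $D=0$, and then the first equation becomes $N^2-BN+E=0$, whose nonnegative roots are $N=(B\pm\sqrt{B^2-4E})/2$; these are real and the relevant branch is nonnegative exactly when $B^2\ge 4E$ (together with $E\ge 0$ this controls the signs), which produces case 1. When $D=0$ but $B^2<4E$, the $T=0$ branch is excluded, so I would look for solutions with $T\ne 0$; factoring $D=0$ out of the second equation gives $T^2=2N-B$, and substituting into the first equation collapses it to $N^2=E$, i.e. $N=\sqrt{E}$ (the nonnegative root, using $E\ge 0$), whence $T^2=2\sqrt{E}-B>0$ and $T=\pm\sqrt{2\sqrt{E}-B}$, giving case 2.

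For the main case $D\ne 0$ I would argue that $T\ne 0$ necessarily (since $T=0$ forced $D=0$), solve the second equation for $N$ as $N=(T^3+BT+D)/(2T)$, and substitute this expression into the first equation. The plan is to show that, after clearing denominators, the first equation becomes a polynomial identity in $T^2$; writing $z=T^2$, the goal is to obtain precisely $z^3+2Bz^2+(B^2-4E)z-D^2=0$. This is where the bulk of the computation lives: I expect to substitute $N=(T^3+BT+D)/(2T)$ into $N^2-(B+T^2)N+E=0$, multiply through by $(2T)^2=4T^2$, and collect terms, watching the odd powers of $T$ cancel so that only even powers (hence powers of $z$) survive. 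Assuming hypotheses (1) and (2) here match the hypotheses of Lemma 2.1 (namely $D\ne 0$ and the sign condition $B<0\Rightarrow B^2<4E$), Lemma 2.1 then guarantees a \emph{unique} positive root $z$, yielding $T=\pm\sqrt{z}$ and the corresponding $N$ from the formula, which is case 3.

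The remaining obligations are the counting claim (``at most two solutions'') and the sign constraint $N\ge 0$. For the counting, in each case the two solutions arise as the $\pm$ pair of $T$-values, so I would note that the $T=0$ branch contributes at most two $N$-values while the $T\ne0$ branch contributes the sign-pair $\pm\sqrt{\cdot}$; the hypotheses are arranged so that these branches do not coexist (the $D=0$ dichotomy on $B^2$ versus $4E$ separates cases 1 and 2). For the sign constraint I would verify in each case that the asserted $N$ is genuinely nonnegative: in case 1 this is where $B^2\ge 4E$ and $E\ge 0$ are used; in case 2 one has $N=\sqrt{E}\ge 0$ directly; in case 3 the nonnegativity of $N=(T^3+BT+D)/(2T)$ must be checked against the positive root $z$, which requires a short sign argument.

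The hard part will be the algebra in case 3: confirming that the substitution really does produce the exact cubic $z^3+2Bz^2+(B^2-4E)z-D^2$ with no residual odd-degree terms, since the non-commutativity motivating the paper does not enter here but the bookkeeping is delicate and a single sign error would misidentify the governing polynomial. A secondary subtlety is ensuring that the reduction is reversible — that every positive root $z$ of the cubic actually yields a valid solution $(T,N)$ of the original system with $N\ge 0$, and not merely that solutions of the system satisfy the cubic — so I would keep the substitution an equivalence rather than a one-way implication.
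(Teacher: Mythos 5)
Your proposal follows essentially the same route as the paper's own proof: split on $D=0$ versus $D\neq 0$, use the second equation to force $T=0$ or $T^2=2N-B$ when $D=0$, and when $D\neq 0$ eliminate $N=(T^3+BT+D)/2T$ into the first equation to obtain the sextic in $T$, i.e.\ the cubic in $z=T^2$, then invoke Lemma 2.1 for the unique positive root. In fact you are somewhat more careful than the paper, which silently assumes the reduction is reversible, does not check $N\geq 0$ in case 3, and does not explain why the $T\neq 0$ branch is vacuous when $B^2\geq 4E$ (this needs hypothesis 2: $B^2\geq 4E$ forces $B\geq 0$, hence $2\sqrt{E}-B\leq 0$).
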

\begin{proof}
(a)\ if $D=0$ by (2.2) then we have $T=0$ and (2.1) gives $N=(B\pm\sqrt{B^2-4E)}/2$ 
\\
(b)\ If $D=0$ and $B^2<4E$, by (2.2)we have
\begin{equation*}
    T(T^2-(2N-B))=0\ \ \textrm{Note that}\ \ 2N-B>0
\end{equation*}
Therefore,
\begin{equation*}
    T=\pm\sqrt{2N-B}
\end{equation*}
By (2.1) we have,
\begin{equation*}
    N^2-(B+2N-B)N+E=0 
\end{equation*}
\begin{equation*}
    N^2=E\ \ \textrm{implies}\ \ N=\sqrt{E}
\end{equation*}
Hence,
\begin{equation*}
    T=\pm\sqrt{2\sqrt{E}-B}\ \ \textrm{and}\ \ N=\sqrt{E}
\end{equation*}
(c) If $D\neq0$ from (2.2) $N=(T^3+BT+D)/2T$ plug this $N$ value to (2.1) we have
\begin{equation*}
    \Big(\frac{T^3+BT+D}{2T}\Big)^2-(B+T^2) \Big(\frac{T^3+BT+D}{2T}\Big)+E=0,
\end{equation*}
\begin{equation*}
    \Big(T^3+BT+D\Big)^2-2T(B+T^2) \Big(T^3+BT+D\Big)+4T^2E=0,
\end{equation*}
\begin{equation*}
    \Big(T^3+BT+D\Big)^2-2T(B+T^2) \Big(T^3+BT+D\Big)+4T^2E=0,
\end{equation*}
\begin{equation*}
    T^6+2BT^4+(B^2-4E)T^2-D^2=0,
\end{equation*}
Let $T=\pm\sqrt{z}$ Then, by $\textbf{Lemma 2.1}.$ we have $z$ as unique positive root satisfies cubic equation
$z^3+2Bz^2+(B^2-4E)z-D^2$.
\end{proof}
\newpage
\begin{theorem} The solutions of left octonionic quadratic equation $x^2+bx+c = 0$ can be obtained by formulas according to the following cases.
\\
\textbf{case 1.} 
 if $b,c\in\mathbb{R}$ and $b^2\geq 4c$, then
\begin{equation*}
\label{eq:my x}
    x=\frac{-b\pm\sqrt{b^2-4c}}{2}
\end{equation*}
\textbf{case 2.}
if $b,c \in \mathbb{R}$ and $b^2<4c,$ then
\begin{equation*}
    x=\frac{1}{2}(-b+\alpha \textbf{i}+\beta\textbf{j}+\gamma \textbf{k}+\delta\textbf{l}+p\textbf{il}+ q\textbf{jl}+r\textbf{kl})
\end{equation*}
\begin{equation*}
\ \ \textrm{where} \ \    \alpha^2+\beta^2+\gamma^2+\delta^2+p^2+q^2+r^2=4c-b^2
    \ \ \textrm{and} \ \  \alpha,\beta,\gamma,\delta,p,q,r\in\mathbb{R}
\end{equation*}
\textbf{case 3.} if $b\in\mathbb{R}$ and $c\notin\mathbb{R}$, then
\begin{equation*}
\hspace{2cm}
    x={\frac{-b}{2}}\pm {\frac{\rho}{2}}\mp { \frac{c_1}{\rho}}\textbf{i}\mp{\frac{c_2}{\rho}}\textbf{j}\mp{\frac{c_3}{\rho}}\textbf{k}\mp{\frac{c_4}{\rho}}\textbf{l}\mp{\frac{c_5}{\rho}}\textbf{il}\mp{\frac{c_6}{\rho}}\textbf{jl}\mp {\frac{c_7}{\rho}}\textbf{kl}
\end{equation*}
\vspace{-2mm}
\begin{equation*}
\textrm{where} \hspace{1mm} c=c_0+c_1\textbf{i}+c_2\textbf{j}+c_3\textbf{k}+c_4\textbf{l}+c_5\textbf{il}+c_6\textbf{jl}+c_7\textbf{kl}  \hspace{2mm} \textrm{and} \hspace{2mm}
\end{equation*}
\begin{equation*}
\rho=\sqrt{{\bigg(b^2-4c_0+\sqrt{(b^2-4c_0)^2 +16(c_1^2+c_2^2+c_3^2+c_4^2+c_5^2+c_6^2+c_7^2)}\bigg)}/2}
\end{equation*}
\textbf{case 4.} if $b\notin\mathbb{R}$, then
\begin{equation*}
x=\frac{-\Re b}{2}-(b'+T)^{-1}(c'-N),
\end{equation*}
where
\ $$b'=b-\Re\ b=\textrm{Im}\ b,\  c'=c-\big((\Re b)/2)(b-(\Re b)/2\big),\ \textrm{and}\  (T,N)\  \textrm{is}\ \textrm{chosen} \  \textrm{as}\ \textrm{follows}.$$
\begin{enumerate}
\item $T=0, N=(B\pm\sqrt{B^2-4E})/2$ provided that $D=0$, $B^2\geq 4E.$
\item $T=\pm\sqrt{2\sqrt{E}-B}, N=\sqrt{E}$ provided that $D=0, B^2 < 4E.$
\item $T=\pm\sqrt{z}, N=(T^3+BT+D)/2T$ provided that $D\neq 0$ and $z$ is the unique positive root of the real polynomial \  $z^3+2Bz^2+(B^2-4E)z-D^2$,
\\
where $B=b'\bar{b'}+c'+\bar{c'}=|b'|^2+2\Re c',\ E=c'\bar{c'}=|c'|^2 ,\ D=\bar{b'}c'+\bar{c'}b'=2\Re (\bar{b'}c')$, are real numbers.
\end{enumerate}
\end{theorem}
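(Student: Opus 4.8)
The plan is to split along whether the linear coefficient $b$ is real, reducing Cases 1--3 to finding octonionic square roots and Case 4 to determining the two real scalar invariants (real part and norm) of the unknown.

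For Cases 1--3 the coefficient $b$ is real, hence commutes with every octonion, so I would complete the square exactly as over a field: setting $y = x + b/2$ turns the equation into $y^2 = (b^2-4c)/4 =: w$, and the problem becomes that of finding all square roots of the octonion $w$. Writing $y = p + u$ with $p = \Re y$, $u = \Im y$ and using $u^2 = -|u|^2$, the equation splits into $p^2 - |u|^2 = \Re w$ and $2pu = \Im w$. If $w$ is a nonnegative real (so $c\in\mathbb{R}$, $b^2 \ge 4c$) the only roots are $\pm\sqrt w$ (Case 1); if $w$ is a negative real ($c\in\mathbb{R}$, $b^2<4c$) then $p=0$ and $u$ sweeps the sphere $|u|^2 = -w$ (Case 2); if $w$ is non-real ($c\notin\mathbb{R}$) then $p\ne 0$ forces $u = \Im w/(2p)$, and solving $4p^4 - 4(\Re w)p^2 - |\Im w|^2 = 0$ for the positive value of $p^2$ gives exactly two roots, which become the stated formula once $\rho = 2p$ and the components $\Re w = (b^2 - 4c_0)/4$, $|\Im w|^2 = c_1^2 + \cdots + c_7^2$ are inserted (Case 3).

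Case 4 is the genuinely octonionic one. Writing $b = \Re b + b'$ with $b' = \Im b \neq 0$ and completing the square with the real part only, $x = -\tfrac{\Re b}{2} + z$, reduces the equation to the depressed form $z^2 + b'z + c' = 0$ with $c'$ as stated and $b'$ purely imaginary. The lever is that every octonion obeys its quadratic minimal equation $z^2 = 2(\Re z)z - |z|^2$; setting $T = 2\Re z$, $N = |z|^2$ and substituting gives $(T+b')z = N - c'$, and since $T + b'$ has positive norm $T^2 + |b'|^2$ it is invertible, yielding $z = (T+b')^{-1}(N - c')$, i.e.\ the stated formula. To pin down $T$ and $N$ I would impose the two defining consistency conditions $\Re z = T/2$ and $|z|^2 = N$. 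Computing these from $z = (T^2+|b'|^2)^{-1}(T-b')(N-c')$---legitimately expanded associatively because by Artin's theorem $b'$ and $c'$ generate an associative subalgebra---the real part gives (using $\Re(b'c') = -\tfrac12 D$) the equation $T^3 + (B-2N)T + D = 0$, and the norm (using multiplicativity and $|T-b'|^2 = T^2+|b'|^2$) gives $N^2 - (B+T^2)N + E = 0$. This is exactly the real system of Lemma \ref{prop:2}, with $B,E,D$ as defined. To invoke that lemma I must check its hypotheses: $E = |c'|^2 \ge 0$ is automatic, and when $B<0$ a short estimate gives $4E - B^2 > 4|\Im c'|^2 + |b'|^4 > 0$, so $B^2 < 4E$ holds; the lemma then produces the three listed families of $(T,N)$, the division by $T$ occurring only in the sub-case $T\neq 0$.

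Finally I would check sufficiency: for each such $(T,N)$ the element $z = (T+b')^{-1}(N-c')$ does solve $z^2 + b'z + c' = 0$. Running the computation backwards, the system forces $\Re z = T/2$ and $|z|^2 = N$, whence $z^2 = Tz - N$ and $z^2 + b'z + c' = (T+b')z - (N-c') = 0$, the cancellation $(T+b')\big[(T+b')^{-1}(N-c')\big] = N-c'$ being the alternative-law identity $a(a^{-1}v)=v$ rather than full associativity. I expect the main obstacle to be the careful bookkeeping of non-associativity throughout: every manipulation must be arranged so that each product either involves a real scalar or stays inside the associative subalgebra generated by $b'$ and $c'$, and the closing cancellation must rest on alternativity. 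The one non-formal step is the estimate showing that Lemma \ref{prop:2}'s sign hypothesis is automatically satisfied.
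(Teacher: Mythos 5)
Your proposal is correct, and in the decisive Case 4 it is essentially the paper's own proof: the same shift $y=x+(\Re b)/2$, Niven's trick of pairing the equation with the companion quadratic $y^2-Ty+N=0$, the same linear relation $(b'+T)y=N-c'$, the same real system in $(T,N)$, and the same appeal to Lemma \ref{prop:2} after checking that $B<0$ forces $B^2<4E$ (your estimate $4E-B^2> 4|\Im c'|^2+|b'|^4>0$ is a rearrangement of the paper's identity $B^2-4E=|b'|^2B+|b'|^2(c'+\bar{c'})+(c'-\bar{c'})^2$). Where you genuinely differ is Cases 1--3: you complete the square there as well and classify all square roots of $w=(b^2-4c)/4$ by splitting into real and imaginary parts, whereas the paper argues Cases 1--2 via invariance of the solution set under conjugation $x\mapsto q^{-1}xq$ (transporting the known complex roots around their conjugation orbit) and Case 3 by expanding $x^2+bx+c=0$ into eight real component equations. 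The computational cost is comparable, but your route buys two things: it treats the three real-$b$ cases uniformly, and it actually proves exhaustiveness in Case 1 (that every octonionic solution is real), which the paper's ``hence there are at most two solutions'' asserts without justification. You also supply two points of rigor the paper passes over silently: the sufficiency direction in Case 4 (verifying that each $(T,N)$ produced by the lemma really yields a root, with the cancellation $a(a^{-1}v)=v$ resting on alternativity rather than associativity), and the explicit appeal to Artin's theorem so that the back-substitution computations can legitimately be carried out associatively inside the subalgebra generated by $b'$ and $c'$.
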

\newpage
\begin{proof}
\textbf{Case 1.}
$b,c\in\mathbb{R}$ and $b^2\geq 4c.$ Note that $x$ is a solution if and only if $q^{-1}xq$ is also a solution for $q\neq0,$ and hence, there are at most two solutions, both are real
\begin{equation*}
    \frac{-b\pm\sqrt{b^2-4c}}{2}
\end{equation*}
\noindent\textbf{Case 2.}
$b,c\in\mathbb{R}$ and $b^2<4c.$ Note that $x$ is a solution if and only if $q^{-1}xq$ is also a solution for $q\neq0,$ and there are at least two complex solutions
\begin{equation*}
    \frac{-b\pm\sqrt{4c-b^2} \textbf{i}}{2}
\end{equation*}
Hence, the solution set is
\begin{equation*}
    \Bigg\{ q^{-1}\Big(\frac{-b+\sqrt{4c-b^2}\textbf{i}}{2}\Big)q: q\neq0\Bigg\}
\end{equation*}
Let $R^2=4c-b^2>0$ and $q\in\mathbb{O}$
\begin{equation*}
    \Bigg\{ q^{-1}\Big(\frac{-b+R\textbf{i}}{2}\Big)q: q\neq0\Bigg\}
\end{equation*}
\begin{equation*}
    \Bigg\{\frac{-b+q^{-1}R\textbf{i}q}{2}: q\neq0\Bigg\}
\end{equation*}
\begin{equation*}
    \Bigg\{\frac{-b+Rq^{-1}\textbf{i}q}{2}: q\neq0\Bigg\}
\end{equation*}
\begin{equation*}
    Rq^{-1}iq=\alpha \textbf{i}+\beta\textbf{j}+\gamma \textbf{k}+\delta\textbf{l}+p\textbf{il}+ q\textbf{jl}+r\textbf{kl}\in\Im\mathbb{O}
\end{equation*}
and
\begin{equation*}
    \overline{Rq^{-1}iq}=-Rq^{-1}iq
\end{equation*}
Therefore,
\begin{equation*}
    |Rq^{-1}iq|=(Rq^{-1}iq)\overline{(Rq^{-1}iq)}=-(Rq^{-1}iq)(Rq^{-1}iq)=R^2=4c-b^2
\end{equation*}
implies,
\begin{equation*}
    \alpha^2+\beta^2+\gamma^2+\delta^2+p^2+q^2+r^2=4c-b^2   
\end{equation*}
Hence solution set is,
\begin{equation*}
 = \Bigg\{\frac{1}{2}(-b+\alpha \textbf{i}+\beta\textbf{j}+\gamma \textbf{k}+\delta\textbf{l}+p\textbf{il}+ q\textbf{jl}+r\textbf{kl}) :\alpha^2+\beta^2+\gamma^2+\delta^2+p^2+q^2+r^2=4c-b^2\Bigg\}
\end{equation*}
\noindent How we will describe having infinitely many solutions? 
\\ For Geometric view,
\\
Consider
\begin{equation*}
|x|^2=\frac{(-b)^2}{4}+\frac{\alpha^2}{4}+\frac{\beta^2}{4}+ \frac{\gamma^2}{4}+\frac{\delta^2}{4}+\frac{p^2}{4}+\frac{q^2}{4}+\frac{r^2}{4}=\frac{b^2+4c-b^2}{4}=c
\end{equation*}
\textbf{Conclusion}: Solutions are set of all points on $S^7$ ($7$ - sphere) in 8-dimension with norm equal to $\sqrt{c}$ (Note that $c\in\mathbb{R}).$

\newpage
\noindent \textbf{Case 3.}
$b\in\mathbb{R}$ and $c\notin\mathbb{R}.$ 
Let $$x=x_0+x_1\textbf{i}+x_2\textbf{j}+x_3\textbf{k}+x_4\textbf{l}+x_5\textbf{il}+x_6\textbf{jl}+x_7\textbf{kl}$$
and  $$c=c_0+c_1\textbf{i}+c_2\textbf{j}+c_3\textbf{k}+c_4\textbf{l}+c_5\textbf{il}+c_6\textbf{jl}+c_7\textbf{kl}$$ Then $x^2+bx+c=0$ becomes the real system
\begin{equation*}
    (x_0^2-x_1^2-x_2^2-x_3^2-x_4^2-x_5^2-x_6^2-x_7^2)+bx_0+c_0=0
\end{equation*}
\begin{equation*}
    \Big (x_0^2+\frac{b}{2}\Big)^2-x_1^2-x_2^2-x_3^2-x_4^2-x_5^2-x_6^2-x_7^2\big )=\frac{b^2}{4}-c_0
\end{equation*}
\begin{equation*}
    (2x_0+b)x_i=-c_i \ \ where \ \ i=1,2,3,4,5,6,7
\end{equation*}
Since $c$ is non-real $(2x_0+b)$ is non-zero
$$(2x_0+b)^2-4x_1^2-4x_2^2-4x_3^2-4x_4^2-4x_5^2-4x_6^2-4x_7^2=(b^2-4c_0)$$
$$(2x_0+b)^4-4c_1^2-4c_2^2-4c_3^2-4c_4^2-4c_5^2-4c_6^2-4c_7^2=(2x_0+b)^2(b^2-4c_0)$$
$$(2x_0+b)^4-(b^2-4c_0)(2x_0+b)^2+4(c_1^2+c_2^2+c_3^2+c_4^2+c_5^2+c_6^2+c_7^2)=0$$
\begin{equation*}
    (2x_0+b)^2=\frac{(b^2-4c_0)\pm\sqrt{(b^2-4c_0)^2+16(c_1^{2}+c_2^{2}+c_3^{2}+c_4^{2}+c_5^{2}+c_6^{2}+c_7^{2}})}{2}
\end{equation*}
\begin{equation*}
   (2x_0+b)=\pm\sqrt{\frac{\Big(b^2-4c_0+\sqrt{(b^2-4c_0)^2+16(c_1^{2}+c_2^{2}+c_3^{2}+c_4^{2}+c_5^{2}+c_6^{2}+c_7^{2}})\Big)}{2}}
\end{equation*}
\begin{equation*}
x_0=\frac{(-b\pm\rho)}{2} \ \  where \ \rho\neq 0
\end{equation*}
\begin{equation*}
x=\frac{-b}{2}\pm\frac{\rho}{2}\mp\frac{c_1}{\rho}\textbf{i}\mp\frac{c_2}{\rho}\textbf{j}\mp\frac{c_3}{\rho}\textbf{k}\mp\frac{c_4}{\rho}\textbf{l}\mp\frac{c_5}{\rho}\textbf{il}\mp\frac{c_6}{\rho}\textbf{jl}\mp\frac{c_7}{\rho}\textbf{kl}
\end{equation*}
\\
\noindent
\textbf{Case 4.} $b\notin\mathbb{R}.$ Rewrite the equation $x^2+bx+c=0$ as
\begin{equation*}
    y^2+b'y+c'=0,
\end{equation*}
where $y=x+(\Re b)/2$, $b'=b-\textrm{Re}b\notin\mathbb{R}$ 
and $c'=c-(\Re b/2)\big(b-(\Re b/2)\big)$. By \cite{Niven},
we observe that the solution of the quadratic equation $y^2+b'y+c'=0$ also satisfies
\begin{equation*}
    y^2-Ty+N=0,
\end{equation*}
where $N=\Bar{y}y\geq0$ and $T=y+\Bar{y}\in\mathbb{R}.$ Hence, $(b'+T)y+(c'-N)=0,$
 and so
\begin{equation*}
    y=-(b'+T)^{-1}(c'-N)
\end{equation*}
\\
because $T\in\mathbb{R}$ and $b'\notin\mathbb{R}$ implies that $b'+T\neq 0$
To solve for $T$ and $N$, we substitute $y$ back into the definitions $T=y+\bar{y}$ and $N=\bar{y}y$ and simplify to obtain the real system
\begin{equation*}
    N^2-(B+T^2)N+E=0,
\end{equation*}
\begin{equation*}
    T^3+(B-2N)T+D=0,
\end{equation*}
where $B=b'\bar{b'}+c'+\bar{c'}=|b'|^2+2\Re c', E=c'\bar{c'}=|c'|^2 , D=\bar{b'}c'+\bar{c'}b'=2\Re \bar{b'}c'$ are real numbers. 
Note that $E=|c'|^2\geq0.$
if $B<0,$ then $c'+\bar{c'}<0$ and $B^2-4E=|b'|^2B+|b'|^2(c'+\bar{c'})+(c'-\bar{c'})^2\leq0$ because of the face that $(c'-\bar{c'})^2\leq 0.$ It follows that $B^2-4E<0,$ otherwise $B^2-4E=0$ and so
$|b'|^2B=|b'|^2(c'+\bar{c'})=(c'-\bar{c'}=0,$ i.e., $b'=0\in\mathbb{R},$ a contradiction. Hence, by lemma $(2.2)$, such system can be solved explicitly as claimed.
Consequently, $x=(-\Re b)/2-(b'+T)^{-1}(c'-T).$
\end{proof}
\newpage
\noindent\textbf{Example 1.}
Consider the equation $x^2+5x+6=0$, 
Find solutions $x\in\mathbb{O}$
\\
\textbf{Solution:}
\\
$b=5$ and $c=6$, therefore, $b^2>4c.$ From theorem 2.3, case 1.
\begin{equation*}
x=\frac{-5\pm\sqrt{5^2-4\times6}}{2}=\frac{-5\pm1}{2}= -3\hspace{2mm} \textrm{or} -2
\end{equation*}
\\
\textbf{Example 2.}
Consider the equation $x^2+1=0$,
Find solutions $x\in\mathbb{O}$
\\
\textbf{Solution:}
\\
$b=0$ and $c=1$, therefore, $b^2<4c.$ From theorem 2.3, case 2.
\begin{equation*}
x=\alpha \textbf{i}+\beta\textbf{j}+\gamma \textbf{k}+\delta\textbf{l}+p\textbf{il}+ q\textbf{jl}+r\textbf{kl}
\end{equation*}
\begin{equation*}
\hspace{1cm}\textrm{where} \ \    \alpha^2+\beta^2+\gamma^2+\delta^2+p^2+q^2+r^2=1
    \hspace{2mm} \textrm{and} \hspace{2mm} \alpha,\beta,\gamma,\delta,p,q,r\in\mathbb{R}
\end{equation*}
so there are infinitely many solutions to the equation $x^2+1=0$ in $\mathbb{O}$.
\\
\textbf{Example 3.} 
Consider the equation $x^2-x+\textbf{l}=0$, Find solutions $x\in\mathbb{O}$
\\
\textbf{Solution:}
\\
$b=-1$ and $c=\textbf{l}$,  From theorem 2.3, case 3.
\begin{equation*}
x=\frac{1}{2}\pm\frac{\rho}{2}\mp\frac{\textbf{l}}{\rho}
\end{equation*}
where
\begin{equation*}
    \rho=\sqrt{\frac{1+\sqrt{17}}{2}}
\end{equation*}
\textbf{Example 4.}
Consider the equation $x^2+\textbf{l}x+\textbf{j}=0$, Find solutions $x\in\mathbb{O}$
\\
\textbf{Solution:}
\\
$b=\textbf{l}\notin\mathbb{R}$ and $c=\textbf{j}$,  From theorem 2.3, case 4
\begin{equation*}
    x=-\frac{\Re b}{2}-(b'+T)^{-1}(c'-N)
\end{equation*}
since $\Re  b=0$ we have $x=-(b'+T)^{-1}(c'-N)$
\noindent where
\noindent note that $B^2<4E$ 
\\
Therefore,
$T=\pm\sqrt{2\sqrt{E}-B}=\pm 1$ and $N=\sqrt{E}=1$
\\
Thus,
$x=-(\textbf{l}+1)^{-1}(\textbf{j}-1)$ or $x=-(\textbf{l}-1)^{-1}(\textbf{j}-1)$
\\
Finally,
\begin{equation*}
x=\frac{(\textbf{l}-1)(1-\textbf{j})}{-2}=\frac{\textbf{l}+\textbf{jl}-1+\textbf{j}}{-2}=\frac{1-\textbf{j}-\textbf{l}-\textbf{jl}}{2}  
\end{equation*}
or
\begin{equation*}
x=\frac{(\textbf{l}+1)(1-\textbf{j})}{-2}=\frac{\textbf{l}+\textbf{jl}+1-\textbf{j}}{-2}=\frac{-1+\textbf{j}-\textbf{l}-\textbf{jl}}{2}  
\end{equation*}

\newpage
\section{Application}
\subsection{Left spectrum of $2\times 2$ octonionic matrix}
We can find left spectrum of $2\times 2$ octonionic matrices by solving corresponding octonionic quadratic equation.
Let's begin with following lemma
\begin{lemma} 
For $p$ and $q\in\mathbb{O},\ \sigma_l (pI+qA)=\{p+qt:t\in \sigma_l(A)\}$, where $I$ is the identity matrix.
\end{lemma}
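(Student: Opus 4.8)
The plan is to prove the two set inclusions separately, working from the definition that $t\in\sigma_l(A)$ precisely when there is a nonzero $v\in\mathbb{O}^2$ with $Av=tv$, where $t$ multiplies each component of $v$ on the left. First I would dispose of the degenerate case $q=0$: then $pI+qA=pI$, whose only left eigenvalue is $p$ (if $(pI)w=sw$ with $w\neq0$ then $(p-s)w=0$, and since $\mathbb{O}$ has no zero divisors $s=p$), which matches $\{p+0\cdot t:t\in\sigma_l(A)\}=\{p\}$ as soon as $\sigma_l(A)\neq\emptyset$.

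For $q\neq0$ I would set up the affine correspondence $t\longmapsto s=p+qt$, whose inverse is $t=q^{-1}(s-p)$ (well defined because a nonzero octonion is invertible). For the inclusion $\supseteq$, start from $Av=tv$ and compute $(pI+qA)v=pv+q(Av)$ componentwise, then try to rewrite $q(Av)=(qt)v=(s-p)v$, so that $v$ is an eigenvector of $pI+qA$ for $s$. For $\subseteq$, reverse the computation with $t:=q^{-1}(s-p)$, starting from $(pI+qA)w=sw$ and extracting $Aw=tw$.

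The hard part, and the reason this is not the trivial quaternionic statement, is that octonions are non-associative, so the two rewritings above are not automatic: at the level of components one meets associators such as $[q,A_{ij},v_j]$ and $[q,t,v_i]$, and $q(tv_i)\neq(qt)v_i$ in general. The key device I would use is Artin's theorem: in an alternative algebra any two elements generate an associative subalgebra, so all products built from $q$ and $t$ (hence from $q^{-1}$, $\bar q$, $t^2$, etc.) associate freely. Thus within the associative subalgebra $\langle q,t\rangle$ one has $q^{-1}(qt)=t$, $(qt)t=qt^2$, and exactly the identities needed to move $q$ across a multiplication. Concretely, I would argue that for the eigenvalue $t$ one may choose the eigenvector $v$ so that its components lie in $\langle q,t\rangle$ (a subalgebra that is associative, in fact quaternionic), reducing the eigenvector equation for the $2\times2$ system to relations inside a single associative subalgebra, after which every step of both inclusions is a legitimate associative manipulation.

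I expect the main obstacle to be exactly this localization: justifying that the eigenvector may be taken with components in $\langle q,t\rangle$, or equivalently controlling the associators coming from the octonionic entries of $A$. I would handle it by writing the $2\times2$ eigenvector equation explicitly (expressing $v_2$ in terms of $v_1$ from the first row and substituting into the second), normalizing $v_1=1$, and checking that the resulting scalar octonionic equation for $t$ is invariant, up to the substitution $t\leftrightarrow p+qt$, under replacing $A$ by $pI+qA$; the alternativity identities above are what make this substitution pass through the non-associative products cleanly.
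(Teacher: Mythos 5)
Your diagnosis of the difficulty is correct, and it is in fact sharper than the paper's own treatment: the paper simply runs the quaternionic one-line computation $(pI+qA)v=pIv+qAv=pv+qtv=(p+qt)v$, which proves only the inclusion $\{p+qt:t\in\sigma_l(A)\}\subseteq\sigma_l(pI+qA)$ and is not valid over $\mathbb{O}$, since the matrix $qA$ has entries $qA_{ij}$ and in general $(qA_{ij})v_j\neq q(A_{ij}v_j)$ and $q(tv_i)\neq(qt)v_i$. However, your proposed repair breaks at exactly the point you flag as the main obstacle: the localization of the eigenvector in $\langle q,t\rangle$ is impossible in general, and so is the normalization $v_1=1$. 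Take
\begin{equation*}
A=\begin{bmatrix} 0 & 1\\ -1-k & j\end{bmatrix},\qquad t=i,\qquad q=j.
\end{equation*}
Row one of $Av=iv$ gives $v_2=iv_1$, and row two reduces to $j(iv_1)=kv_1$. If $v_1\in\mathbb{H}$ this reads $(ji)v_1=-kv_1=kv_1$, forcing $v_1=0$; writing $\mathbb{O}=\mathbb{H}\oplus\mathbb{H}l$ and using the Cayley--Dickson relation $a(bw)=(ba)w$ for $a,b\in\mathbb{H}$, $w\in\mathbb{H}l$, one checks that the eigenvectors for $t=i$ are exactly $v=(v_1,iv_1)$ with $v_1\in\mathbb{H}l\setminus\{0\}$. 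So $i\in\sigma_l(A)$, but every eigenvector has components outside $\langle q,t\rangle=\langle j,i\rangle=\mathbb{H}$; no eigenvector can be rescaled to have $v_1=1$ (right scaling does not preserve the eigenvector equation octonionically); and the scalar equation your normalization would produce, $t^2-jt+1+k=0$, is not satisfied by $t=i$ (it gives $2k\neq0$). Artin's theorem cannot rescue this: the obstructing associators $[q,A_{ij},v_j]$ involve three independent octonions, beyond the reach of any two-generator subalgebra.

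The same example shows what a correct proof would actually have to produce. The conclusion of the lemma does survive here: $qt=ji=-k$ is a left eigenvalue of $qA=\begin{bmatrix}0 & j\\ -i-j & -1\end{bmatrix}$, but its eigenvectors are $w=(w_1,-iw_1)$ with $w_1\in\mathbb{H}l$, whereas the eigenvectors of $A$ are $(v_1,+iv_1)$; the transported vector fails row one ($j(iv_1)=kv_1\neq -kv_1=(qt)v_1$), and the eigenvector must be modified, e.g.\ by taking $w_2=q^{-1}\big((qt)w_1\big)$ and then verifying the second row, where Moufang identities such as $s\big(q^{-1}(sw)\big)=\big((sq^{-1})s\big)w$ enter. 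The genuine missing work --- which neither the paper's one-liner, nor your localization plan, nor your $\subseteq$ argument (which relies on the same broken manipulations) supplies --- is to prove that such a modified eigenvector $w\neq0$ exists for an arbitrary octonionic matrix $A$, and symmetrically for the reverse inclusion. As it stands, both your proposal and the paper establish the lemma only in the associative (quaternionic) setting, and the full octonionic statement remains unproven.
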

\begin{proof}
Let $t$ be a left eigenvalue of $A$ with eigenvector $v$,
\begin{equation*}
    Av=tv
\end{equation*}
consider 
\begin{equation*}
    (pI+qA)v=pIv+qAv=pv+qtv=(p+qt)v
\end{equation*}
Therefore, $p+qt$ where $t\in\sigma_l(A)$ represent the set of left spectrum of $(pI+qA).$\
\end{proof}
\begin{theorem} Let
$A=
\begin{bmatrix} 
a & b \\
c & d 
\end{bmatrix}$, \ where $a,b,c, \in \mathbb{O}$.
\\
 (i) if $bc=0$, then $\sigma_l(A)=\{a,d\}$\\
 (ii) if $bc\neq 0$, then  $\sigma_l(A)=\{a+bt: t^2+b^{-1} (a-d)t-b^{-1}c=0\}$
\end{theorem}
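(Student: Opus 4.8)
The plan is to dispatch the two cases separately and, in the substantive case $bc\neq 0$, to convert the left-eigenvalue problem into the monic left octonionic quadratic equation already solved in Theorem 2.3, organizing the reduction through Lemma 3.1.

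For (i) I argue directly from the absence of zero divisors in $\mathbb{O}$. If $b=0$ the matrix is lower triangular and the eigenvector equations are $av_1=\lambda v_1$ and $cv_1+dv_2=\lambda v_2$; when $v_1\neq 0$ the first gives $(a-\lambda)v_1=0$, hence $\lambda=a$, and when $v_1=0$ the second gives $\lambda=d$, so $\sigma_l(A)=\{a,d\}$. The case $c=0$ is identical after transposing the roles of the two coordinates. Non-associativity never intervenes here because each of these equations contains only a single octonionic product.

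For (ii) both $b,c\neq 0$, so $b^{-1}$ exists and I may write $A=aI+bB$ with $B=\begin{bmatrix}0&1\\ b^{-1}c& b^{-1}(d-a)\end{bmatrix}$; by Lemma 3.1, $\sigma_l(A)=\{a+bt:t\in\sigma_l(B)\}$, so it suffices to prove $\sigma_l(B)=\{t: t^{2}+b^{-1}(a-d)t-b^{-1}c=0\}$. For an eigenvector $(u_1,u_2)^{T}$ of $B$ the first row is the exact relation $u_2=t\,u_1$, so $u_1\neq 0$ (else $u_2=0$). The inclusion $\supseteq$ is then clean: testing $(1,t)^{T}$, the second row demands $b^{-1}c+(b^{-1}(d-a))t=t^{2}$, i.e. $t^{2}+b^{-1}(a-d)t-b^{-1}c=0$; hence every root $t$—whose existence is guaranteed by Theorem 2.3—is a left eigenvalue of $B$, and $a+bt$ is a left eigenvalue of $A$.

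The reverse inclusion is the crux. Starting from a genuine eigenvector $(u_1,u_2)^{T}$ with $u_1\neq 0$ and $u_2=t\,u_1$, the right-inverse property $R_{u_1^{-1}}=R_{u_1}^{-1}$ of the alternative algebra $\mathbb{O}$ lets me pass to $(1,t)^{T}=(u_1u_1^{-1},(t u_1)u_1^{-1})^{T}$; the trouble is the second row, which reads $(b^{-1}c)u_1+(b^{-1}(d-a))(t\,u_1)=t(t\,u_1)=t^{2}u_1$ (left alternative law), so that cancelling $u_1$ and recovering the scalar quadratic requires killing the associator $[\,b^{-1}(d-a),\,t,\,u_1\,]$, which is nonzero for generic octonions. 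Controlling this single associator is the step I expect to be the main obstacle, and my plan is to exploit both eigenvector equations simultaneously—$bv_2=(\lambda-a)v_1$ and $(\lambda-d)v_2=cv_1$ in the original coordinates—so that the Moufang identities for $\mathbb{O}$ force the associator to drop out, collapsing the second row to $t^{2}+b^{-1}(a-d)t-b^{-1}c=0$. With that established, $\sigma_l(B)$ is exactly the solution set of the quadratic, and Lemma 3.1 returns $\sigma_l(A)=\{a+bt: t^{2}+b^{-1}(a-d)t-b^{-1}c=0\}$, completing (ii).
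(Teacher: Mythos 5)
Your case (i) is fine, and so is the inclusion $\supseteq$ of case (ii): with the test vector $(1,t)^{T}$ every product involved is either a plain binary product or is handled by alternativity (e.g.\ $[b,t,t]=0$, where $[x,y,z]=(xy)z-x(yz)$ denotes the associator), so each root of the quadratic genuinely yields the left eigenvalue $a+bt$ of $A$. The genuine gap is exactly where you predict it: the reverse inclusion is never proved. Saying that you ``plan'' to make the Moufang identities force $[\,b^{-1}(d-a),\,t,\,u_1\,]$ to vanish is a hope, not an argument: the Moufang identities only control products with a repeated factor and give no handle on an associator of three independent octonions. Moreover the standard normalization is unavailable, because eigenvectors of a fixed left eigenvalue are not closed under right scaling in $\mathbb{O}$ (in general $(tv)\alpha\neq t(v\alpha)$), so you cannot pass from $(u_1,tu_1)^{T}$ to $(1,t)^{T}$. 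What must be excluded is a $t$ which fails the quadratic but admits some $u_1\neq0$ with
\begin{equation*}
\bigl(b^{-1}c+(b^{-1}(d-a))t-t^{2}\bigr)u_1=[\,b^{-1}(d-a),\,t,\,u_1\,],
\end{equation*}
and your proposal contains no mechanism for excluding it. Note also that Lemma 3.1, as proved, gives only the inclusion $\{a+bt:t\in\sigma_l(B)\}\subseteq\sigma_l(A)$, and its proof already uses the associativity step $b(tv)=(bt)v$; your claimed set equality needs the converse direction of that lemma as well, which is not free either.

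In fairness, the paper's own proof has the identical hole: it passes from equations (3.1)--(3.2) to (3.3) by cancelling the eigenvector component $x$, a step that is legitimate in the associative quaternionic setting of Huang--So but not in $\mathbb{O}$ --- indeed, in $\mathbb{O}$ even the expression $b^{-1}(a-d)t$ in the statement requires a declared bracketing. So you have correctly isolated the difficulty that the paper silently skips, which is genuine insight; but isolating an obstacle is not the same as overcoming it, and as written the inclusion $\sigma_l(A)\subseteq\{a+bt: t^{2}+b^{-1}(a-d)t-b^{-1}c=0\}$ remains unproved in both your proposal and the paper.
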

\begin{proof} 
(i) $A$ is a triangular matrix, then results follows.\\
(ii) \textbf{Using Lemma 3.1}, we have
\begin{equation*}
\sigma_l(A)= \sigma_l \begin{bmatrix} aI+ b\begin{bmatrix}
0 & 1 \\
b^{-1}c & b^{-1}(d-a) 
 \end{bmatrix} \end{bmatrix}
\end{equation*}
Let $t$ be any left eigenvalue of 
$ \begin{bmatrix}
0 & 1 \\
b^{-1}c & b^{-1}(d-a) 
 \end{bmatrix}$, then there exists non-zero vector 
 $v=\begin{bmatrix} x\\
 y 
 \end{bmatrix}$
such that
\begin{equation*}
\begin{bmatrix}
 0 & 1 \\
b^{-1}c & b^{-1}(d-a)
 \end{bmatrix}\begin{bmatrix} x\\
 y 
 \end{bmatrix}=t\begin{bmatrix} x\\
 y 
 \end{bmatrix}
\end{equation*}
\begin{equation}
    y=tx,
  \end{equation}
  \begin{equation}
          b^{-1}cx+b^{-1}(d-a)y=ty
  \end{equation}
From equations (3.1) and (3.2), we have
\begin{equation}
    t^2+b^{-1}(a-d)t-b^{-1}c=0.
\end{equation}
We can use Theorem $(3.2)$ to find roots of $(3.3)$ and explicitly find left spectrum of given $2\times 2$ octonionic Matrix $A$.
\end{proof}

\newpage


\end{document}